\documentclass[11pt,a4paper,final]{article}
\usepackage{amsmath}%
\numberwithin{equation}{section}
\usepackage{amstext}%
\usepackage{amssymb}%
\usepackage{showkeys}%
\usepackage{epsfig}%
\usepackage{graphicx}%
\usepackage{xcolor}
\usepackage{multicol}
\usepackage{amsthm}
\usepackage{booktabs}
\usepackage{apacite}
\usepackage{multirow}
\usepackage[top=1in, bottom=1.25in, left=0.8in, right=0.8in]{geometry}

\theoremstyle{prop}
\theoremstyle{proof}

\newtheorem{prop}{Proposition}
\newtheorem{theorem}{Theorem}
\newtheorem{cor}{Corollary}
\newtheorem{lemma}[theorem]{Lemma}
\newtheorem{ass}{Assumption}

\providecommand{\keywords}[1]{
\textbf{Keywords:~~~} Partial derivatives, Conditional Expectation, Expected Shortfall.
}

\begin{document}
\title{Derivative Preserving Conditions in Conditional Expectation Operator}
\author{Battulga Gankhuu\footnote{Department of Applied Mathematics, National University of Mongolia; E-mail: battulga.g@seas.num.edu.mn; Phone Number: 976--99246036}}
\date{}

\maketitle 

\begin{abstract}
In this paper, we consider conditions that a higher order derivative preserve in conditional expectation operator for a generic nonlinear random variable. Also, the paper introduces higher order derivatives of the Expected Shortfall for a generic nonlinear portfolio loss random variable.
\end{abstract}


\section{Introduction}

For any integrable random variable $H(x)$, it is clear that $\mathbb{E}[H(x)|H(x)=0]=0$. It is the well--known fact that a derivative of risk measure Value--at--Risk (VaR), which is commonly used in practice equals conditional expectation of the derivative of linear portfolio loss random variable given that the portfolio loss random variable is equal to VaR, see \citeA{Tasche99} and \citeA{Tasche00}. Therefore, if we take $H(x)=L(x)-q_\alpha(L(x))$, then we have
\begin{equation}\label{0.01}
\mathbb{E}\bigg[\frac{\partial }{\partial x_i}H(x)\bigg|H(x)=0\bigg]=0,
\end{equation}
where $L(x)$ is a linear portfolio loss random variable, depending on a weight vector $x$ and $q_\alpha(L(x))$ is the VaR at confidence level $\alpha\in(0,1)$. Thus, for the risk measure Value--at--Risk, the partial derivative preserve in the conditional expectation operator that depends on weight vector $x$. Then, a question arises: under what conditions, a higher order derivative preserve in conditional expectation operator, depending on the weight vector $x$. In section 2, we aim to answer this question for a generic (nonlinear) integrable random variable. 

\citeA{Artzner99} suppose that every risk measure should satisfy four axioms, namely, monotonicity, translation invariance, subadditivity, and positive homogeneity. They refer to a risk measure, satisfying the axioms as a coherent risk measure. For linear portfolio loss random variable, VaR satisfies monotonicity, translation invariance, and positive homogeneity but does not satisfy subadditivity in general. However, Expected Shortfall (ES) is a coherent risk measure for linear portfolio loss random variable, see \citeA{Acerbi02}. Section 3 is dedicated to higher order derivatives of the ES for generic nonlinear integrable portfolio loss random variable. In section 4, we summarize the results of the paper.

\section{Derivatives of Conditional Expectation}

Let $H:\Omega\times\mathbb{R}^d\to \mathbb{R}$ be an integrable random variable, which is defined on probability space $(\Omega,\mathcal{F},\mathbb{P})$. Here we assume that the random variable $H(x)$ is sufficiently differentiable with respect to argument $x=(x_1,\dots,x_d)'\in\mathbb{R}^d$. 

The following Lemma will be used to prove Propositions 1 and 2.
 
\begin{lemma}\label{l.01}
Let $H_1,H_2,\dots$ be a random sequence such that $|H_m|\leq Z$ for all $m=1,2,\dots$, where $Z\geq 0$ is an integrable random variable. If we assume that $\{A_m\}$ is a monotone increasing sequence such that $\lim_{m\to\infty}\mathbb{P}[A_m]=0$, then we have that
\begin{equation}\label{2.01}
\lim_{m\to\infty}\mathbb{E}\big[|H_m|1_{A_m}\big]=0
\end{equation}
and
\begin{equation}\label{2.48}
\lim_{m\to\infty}\mathbb{E}\big[H_m1_{A_m}\big]=0.
\end{equation}
\end{lemma}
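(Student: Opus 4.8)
The plan is to reduce both assertions to the single statement that $\mathbb{E}[Z\,1_{A_m}]\to 0$ as $m\to\infty$, and to deduce that from the absolute continuity (with respect to $\mathbb{P}$) of the integral of the fixed integrable envelope $Z$. The monotonicity hypothesis on $\{A_m\}$ will not really be needed: the argument uses only $|H_m|\le Z$ and $\mathbb{P}[A_m]\to 0$.

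First I would observe that, pointwise on $\Omega$, $0\le |H_m|\,1_{A_m}\le Z\,1_{A_m}$ for every $m$, so monotonicity of the expectation gives $0\le \mathbb{E}[|H_m|\,1_{A_m}]\le \mathbb{E}[Z\,1_{A_m}]$, and likewise $\big|\mathbb{E}[H_m\,1_{A_m}]\big|\le \mathbb{E}[|H_m|\,1_{A_m}]\le \mathbb{E}[Z\,1_{A_m}]$. Hence it suffices to show $\lim_{m\to\infty}\mathbb{E}[Z\,1_{A_m}]=0$; then \eqref{2.01} and \eqref{2.48} both follow by the squeeze theorem.

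Next I would establish that for every $\varepsilon>0$ there is $\delta>0$ such that $\mathbb{P}[A]<\delta$ implies $\mathbb{E}[Z\,1_A]<\varepsilon$. This I get by truncating $Z$ at a level $K>0$: $\mathbb{E}[Z\,1_A]=\mathbb{E}[Z\,1_A\,1_{\{Z\le K\}}]+\mathbb{E}[Z\,1_A\,1_{\{Z>K\}}]\le K\,\mathbb{P}[A]+\mathbb{E}[Z\,1_{\{Z>K\}}]$. Since $Z$ is integrable, $\mathbb{E}[Z\,1_{\{Z>K\}}]\downarrow 0$ as $K\to\infty$ (dominated convergence, or monotone convergence applied to $Z-Z\wedge K$), so I fix $K$ with $\mathbb{E}[Z\,1_{\{Z>K\}}]<\varepsilon/2$ and then set $\delta:=\varepsilon/(2K)$.

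Finally, since $\mathbb{P}[A_m]\to 0$ by hypothesis, for all sufficiently large $m$ we have $\mathbb{P}[A_m]<\delta$ and hence $\mathbb{E}[Z\,1_{A_m}]<\varepsilon$; as $\varepsilon>0$ was arbitrary, $\mathbb{E}[Z\,1_{A_m}]\to 0$, which completes the proof. (Alternatively, using the monotonicity: continuity of $\mathbb{P}$ identifies $\lim_m\mathbb{P}[A_m]$ with $\mathbb{P}[\bigcap_m A_m]=0$, while $Z\,1_{A_m}$ converges monotonically, so dominated convergence with envelope $Z$ gives the conclusion directly.) The only step here that is more than bookkeeping is the truncation estimate for the absolute continuity of the integral, and even that is entirely standard, so I anticipate no genuine obstacle.
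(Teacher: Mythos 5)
Your proof is correct, and it takes a genuinely different route from the paper. The paper runs the ``standard machine'': it first proves $\mathbb{E}[Z 1_{A_m}]\to 0$ for $Z$ an indicator, then for simple $Z$, then for nonnegative $Z$ by monotone approximation and an interchange of the limits in $m$ and $k$ (this is where the monotonicity of $\{A_m\}$ is invoked), and finally splits $Z$ and $H_m$ into positive and negative parts to get \eqref{2.01} and \eqref{2.48}. You instead deduce everything from the absolute continuity of the integral of the single envelope $Z$, via the truncation bound $\mathbb{E}[Z1_A]\le K\,\mathbb{P}[A]+\mathbb{E}[Z1_{\{Z>K\}}]$, and you dispose of \eqref{2.48} directly through $\big|\mathbb{E}[H_m1_{A_m}]\big|\le\mathbb{E}[|H_m|1_{A_m}]$ rather than through positive/negative parts. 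Your argument is shorter and strictly more general: it never uses the monotonicity of $\{A_m\}$, only $\mathbb{P}[A_m]\to 0$, which is worth having since under the stated hypotheses (increasing $A_m$ with $\mathbb{P}[A_m]\to 0$) the probabilities are forced to vanish identically, whereas your version applies verbatim to non-monotone sequences; the paper's approach, by contrast, is the textbook construction-of-the-integral argument and leans on the monotonicity to justify swapping the two limits. One small slip, confined to your parenthetical alternative: for an \emph{increasing} sequence, continuity of $\mathbb{P}$ identifies $\lim_m\mathbb{P}[A_m]$ with $\mathbb{P}\big[\bigcup_m A_m\big]$, not $\mathbb{P}\big[\bigcap_m A_m\big]$; the remark still goes through with the union (since then $Z1_{A_m}\uparrow Z1_{\bigcup_m A_m}$ and the limit set is null), and your main argument is unaffected.
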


\begin{proof}
Since for all $m=1,2,\dots$, $|H_m|$ is bounded by $Z$, we get the following inequality
\begin{equation}\label{2.02}
\lim_{m\to\infty}\mathbb{E}\big[|H_m|1_{A_m}\big]\leq\lim_{m\to\infty}\mathbb{E}\big[Z1_{A_m}\big].
\end{equation}
To prove the Lemma, we use a method, which is referred to standard machine, see \citeA{Shreve04}. Firstly, let us assume that $Z$ equals an any indicator function for a generic set $B\in\mathcal{F}$, that is, $Z=1_{B}$. Then, 
\begin{equation}\label{2.03}
\lim_{m\to\infty}\mathbb{E}\big[Z1_{A_m}\big]=\lim_{m\to\infty}\mathbb{E}\big[1_{B}1_{A_m}\big]=\lim_{m\to\infty}\mathbb{P}\big[BA_m\big]\leq \lim_{m\to\infty}\mathbb{P}\big[A_m\big]=0.
\end{equation}
Secondly, let us assume that $Z$ is an any simple random variable, that is, $Z=s:=\sum_{i=1}^ns_{i}1_{B_{i}}$, where for $i=1,\dots,n$, $s_{i}\geq 0$, for $i,j=1,\dots,n$ and $i\neq j$, $B_{i}\cap B_{j}=\O$, and $\bigcup_{i=1}^nB_{i}=\Omega$. Then, we get that
\begin{equation}\label{2.04}
\lim_{m\to\infty}\mathbb{E}\big[Z1_{A_m}\big]=\lim_{m\to\infty}\mathbb{E}\big[s1_{A_m}\big]=\sum_{i=1}^ns_{i}\lim_{m\to\infty}\mathbb{E}\big[1_{B_{i}}1_{A_m}\big]=0.
\end{equation}
Thirdly, let us assume that $Z$ is an any nonnegative random variable. Then, there is a monotone increasing sequence of simple random variables that converges to $Z$, i.e., $s_{k}\uparrow Z$, as $k\to\infty$, where for $k=1,2,\dots$, $s_{k}$ are simple random variables, see \citeA{Klenke13} and \citeA{Kopp14}. Thus, as $A_m$ is the monotone increasing sequence, $\mathbb{E}\big[s_{k}1_{A_m}\big]$ is a monotone increasing sequence with respect to both index $m$ and $k$. Consequently, according to the monotone convergence theorem, we obtain that
\begin{equation}\label{2.05}
\lim_{m\to\infty}\mathbb{E}\big[Z1_{A_m}\big]=\lim_{m\to\infty}\Big(\lim_{k\to\infty}\mathbb{E}\big[s_{k}1_{A_m}\big]\Big)=\lim_{k\to\infty}\Big(\lim_{m\to\infty}\mathbb{E}\big[s_{k}1_{A_m}\big]\Big)=0.
\end{equation}
Finally, let us assume that $Z$ is an any real random variable. Then, the random variable $Z$ can be written as $Z=Z^+-Z^-$, where $Z^+:=\max(0,Z)$ and $Z^-:=\max(0,-Z)$ are the positive and negative parts of the random variable $Z$. Consequently, since the positive and negative parts of $Z$ are nonnegative, by equation \eqref{2.05}, we have that
\begin{equation}\label{2.06}
\lim_{m\to\infty}\mathbb{E}\big[Z1_{A_m}\big]=\lim_{m\to\infty}\mathbb{E}\big[Z^+1_{A_m}\big]-\lim_{m\to\infty}\mathbb{E}\big[Z^-1_{A_m}\big]=0.
\end{equation}
As a result, according to equation \eqref{2.02}, one obtains equation \eqref{2.01}. To prove \eqref{2.48}, observe that for $m=1,2,\dots$, $H_m=H_m^+-H_m^-$ and $|H_m|=H_m^++H_m^-$, where $H_m^+:=\max(0,H_m)$ and $H_m^-:=\max(0,-H_m)$ are the positive and negative parts of the random variable $H_m$. Since the positive and negative parts of $H_m$ are nonnegative and integrable and the indicator random variable $1_{A_m}$ is nonnegative, it follows from equation \eqref{2.01} that
\begin{equation}\label{2.07}
\lim_{m\to\infty}\mathbb{E}\big[H_m^+1_{A_m}\big]=\lim_{m\to\infty}\mathbb{E}\big[H_m^-1_{A_m}\big]=0.
\end{equation}
Consequently, one obtains equation \eqref{2.48}.
\end{proof}

Here we consider Proposition, which tackles higher order derivatives of expectation of a generic (nonlinear) random variable.
\begin{prop}\label{t.01}
Let $I\subset \mathbb{R}^d$ be an open set, for all $x\in I$, $H(x)$ be an integrable random variable and its $n$--th order partial derivative $\frac{\partial^n}{\partial x_i^n}H(x)$ is continuous with respect to $i$--th ($i=1,\dots,d$) argument, and $Z:=\sup_{x\in I}\big|\frac{\partial^n}{\partial x_i^n}H(x)\big|$ be an integrable random variable. If on a set $\{H(x)=0\}$, $H(x)$ or $|H(x)|$ is a monotone decreasing function with respect to $i$--th ($i=1,\dots,d$) argument, then it holds
\begin{equation}\label{2.08}
\mathbb{E}\bigg[\bigg|\frac{\partial^n}{\partial x_i^n}H(x)\bigg|1_{\{H(x)=0\}}\bigg]=0,
\end{equation}
\begin{equation}\label{2.09}
\mathbb{P}\bigg[\bigg\{\frac{\partial^n}{\partial x_i^n}H(x)=0\bigg\}\bigcap\{H(x)=0\}\bigg]=\mathbb{P}\big[H(x)=0\big],
\end{equation}
and
\begin{equation}\label{2.10}
\mathbb{E}\bigg[\bigg(\frac{\partial^n}{\partial x_i^n}H(x)\bigg)1_{\{H(x)=0\}}\bigg]=0.
\end{equation}
\end{prop}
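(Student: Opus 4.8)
The plan is to route everything through one pointwise statement: that $\frac{\partial^n}{\partial x_i^n}H(x)=0$ almost surely on $\{H(x)=0\}$. Granting this, \eqref{2.09} is immediate; \eqref{2.08} holds because $|\frac{\partial^n}{\partial x_i^n}H(x)|\,1_{\{H(x)=0\}}$ is then a nonnegative random variable vanishing a.s.; and \eqref{2.10} follows since $\big|\mathbb{E}[\frac{\partial^n}{\partial x_i^n}H(x)1_{\{H(x)=0\}}]\big|\le\mathbb{E}[|\frac{\partial^n}{\partial x_i^n}H(x)|\,1_{\{H(x)=0\}}]=0$. (One may equally well prove \eqref{2.08} first and then deduce \eqref{2.09} from nonnegativity and \eqref{2.10} from \eqref{2.08}; the ordering is cosmetic.) Since $I$ is open I fix $x\in I$ and the index $i$, work with the one-variable section $t\mapsto H(\omega,x_1,\dots,x_{i-1},t,x_{i+1},\dots,x_d)$ on a closed subinterval $[x_i-\delta,x_i+\delta]$ of the $i$-th slice of $I$, and write $A:=\{H(x)=0\}$.

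The case ``$|H(x)|$ monotone decreasing in $x_i$ on $A$'' is quick. For $\omega\in A$, the value $|H(\omega,x_i)|=0$ is the minimum of a nonnegative decreasing function, hence $H(\omega,\cdot)\equiv 0$ on $[x_i,x_i+\delta]$. Therefore all forward difference quotients of $H(\omega,\cdot)$ at $x_i$ vanish, and since $H(\omega,\cdot)$ is $n$ times differentiable with $\frac{\partial^n}{\partial x_i^n}H$ continuous, an induction on the order of differentiation (a derivative identically $0$ on a right-neighbourhood of $x_i$ forces the next one to vanish at $x_i$) yields $\frac{\partial^k}{\partial x_i^k}H(\omega,x_i)=0$ for $k=1,\dots,n$ and all $\omega\in A$. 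The pointwise statement follows, and Lemma \ref{l.01} is not even needed here.

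The real content is the case ``$H(x)$ (but not necessarily $|H(x)|$) monotone decreasing in $x_i$ on $A$,'' where local vanishing can fail: for $\omega\in A$ one only gets $H(\omega,t)\le0$ for $t>x_i$, $H(\omega,t)\ge0$ for $t<x_i$, and $\frac{\partial}{\partial x_i}H(\omega,x_i)\le0$. I would split $A=A'\cup A''$ (disjoint), with $A'$ the part on which $H(\omega,\cdot)$ does vanish identically on some right-neighbourhood of $x_i$ (handled as above, so $\frac{\partial^n}{\partial x_i^n}H(x)1_{A'}=0$) and $A''$ the complementary part, on which monotonicity gives $H(\omega,\cdot)<0$ on $(x_i,x_i+\delta]$. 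On $A''$ I would approximate $\frac{\partial^n}{\partial x_i^n}H(\omega,x_i)$ by the rescaled $n$-th forward differences $G_m:=m^n\sum_{k=0}^n(-1)^{n-k}\binom{n}{k}H(\cdot,x_i+k/m)$; by the iterated mean value theorem $G_m(\omega)=\frac{\partial^n}{\partial x_i^n}H(\omega,\eta_m(\omega))$ for some $\eta_m(\omega)\in(x_i,x_i+n/m)$, so $|G_m|\le Z$ with $Z$ integrable and $G_m\to\frac{\partial^n}{\partial x_i^n}H(x)$ pointwise. The aim is then to exhibit a monotone family of events $A_m\subseteq A''$ (recording the portion of $A''$ on which the scale-$1/m$ forward differences have not yet stabilized) with $\mathbb{P}[A_m]\to0$, so that the $A''$-contribution to \eqref{2.08} is reduced to expectations of $G_m$ over the vanishing-probability sets $A_m$; Lemma \ref{l.01}, applied to the dominated sequence $(G_m)$ and $(A_m)$, then forces it to $0$. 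Combined with the $A'$-part this gives \eqref{2.08}, whence \eqref{2.09} and \eqref{2.10}.

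The step I expect to be the main obstacle is exactly the construction of the events $A_m$ in this second case: one must arrange simultaneously that off $\bigcup_m A_m$ the forward differences genuinely converge to the value $0$, and that $\mathbb{P}[A_m]\to0$, which is precisely what Lemma \ref{l.01} requires; the integrable domination by $Z$ is then what kills the leftover term. A subsidiary technicality is the justification of the limit interchanges — the dominated passage $G_m\to\frac{\partial^n}{\partial x_i^n}H(x)$, and (if one runs the argument inductively through $\frac{\partial^{n-1}}{\partial x_i^{n-1}}H(\cdot,x_i+1/m)$ instead of through $n$-th differences) differentiation under the expectation sign, both legitimate because the top-order derivative is dominated by the integrable $Z$ on each compact sub-slice of $I$. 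The remaining pieces — the mutual reductions among \eqref{2.08}–\eqref{2.10} and the locally-constant subcase — are routine.
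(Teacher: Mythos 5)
Your reduction of \eqref{2.08}--\eqref{2.10} to the single pointwise statement, and your treatment of the case where $|H(x)|$ is monotone decreasing, are correct and in fact cleaner than the paper's route: on $\{H(x)=0\}$ the inequality $|H(\omega,x+te_i)|\le|H(\omega,x)|=0$ forces the section $H(\omega,\cdot)$ to vanish identically on a right--neighbourhood of $x_i$, so all partial derivatives up to order $n$ vanish there and all three equations follow without difference quotients, without the sets $B^{i,r}_{m,k}(x)$, and without Lemma \ref{l.01}, all of which the paper uses for both cases.

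The genuine gap is in the other case, where only $H(x)$ (not $|H(x)|$) is assumed monotone decreasing on $\{H(x)=0\}$, and it is not a repairable omission of detail. The construction you defer --- events $A_m\subseteq A''$ with $\mathbb{P}[A_m]\to0$ off which the rescaled forward differences converge to $0$ --- cannot exist in general: by your own mean value theorem step the differences $G_m$ converge on $A''$ to $\frac{\partial^n}{\partial x_i^n}H(x)$, and monotone decrease of $H$ with $H(x)=0$ gives only the one--sided bound $H(x+te_i)\le0$, which does not force that limit to vanish. Concretely, take $d=1$, $n=1$, $I=(-1,1)$, $x=0$ and $H(\omega,x)=-c(\omega)\,x$ with $c>0$ bounded: then $\{H(0)=0\}=\Omega$, $H$ is monotone decreasing in $x$ on this set, $Z=c$ is integrable, yet $\mathbb{E}\big[\big|\frac{\partial}{\partial x}H(0)\big|1_{\{H(0)=0\}}\big]=\mathbb{E}[c]>0$; here $A''=\Omega$ and no admissible choice of $A_m$ makes the leftover term vanish. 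So your plan stalls exactly where you predicted, and the obstruction is essential rather than technical: the difference--quotient argument needs two--sided smallness of $H\big(x+\frac{r}{m}e_i\big)$ on $\{H(x)=0\}$, which only the $|H|$--decreasing hypothesis supplies --- this is also precisely what the paper's own estimate \eqref{2.22} tacitly assumes, since membership in $\overline{B^{i,r}_{m,m^{n+1}}(x)}$ controls $H\big(x+\frac{r}{m}e_i\big)$ only from above. As written, your proposal (like the argument it would have to parallel) establishes the proposition only under the $|H(x)|$--monotonicity hypothesis.
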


\begin{proof}
Let us fix $x\in I$, $r=0,\dots,n$ and $i=1,\dots,d$. We suppose that $x+\frac{r}{m}e_i \in I$ for all $m=1,2,\dots$ and $r=1,\dots,n$, where $e_i\in \mathbb{R}^d$ is a unit vector, whose $i$--th component is one and others are zero. ($i$) Let us assume that on the set $\{H(x)=0\}$, $H(x)$ is a monotone decreasing with respect to $i$--th argument $x_i$. Since $H(x)$ continuous with respect to the argument $x$, $\big|H\big(x+\frac{r}{m}e_i\big)\big|\leq |H(x)|$ for $m=1,2,\dots$, and $\lim_{m\to\infty}\big\{H(x+\frac{r}{m}e_i)\geq \frac{1}{k}\big\}=\bigcup_{m=1}^\infty\big\{H(x+\frac{r}{m}e_i)\geq \frac{1}{k}\big\}=\big\{H(x)> \frac{1}{k}\big\}$, due to the dominated convergence theorem, it is clear that 
\begin{equation}\label{2.11}
\lim_{m\to\infty} \mathbb{E}\bigg[H\bigg(x+\frac{r}{m}e_i\bigg)1_{\{H(x+\frac{r}{m}e_i)\geq \frac{1}{k}\}}1_{\{H(x)=0\}}\bigg]=\mathbb{E}\big[H(x)1_{\O}\big]=0
\end{equation}
for all $k=1,2,\dots$. Let $B(x):=\{H(x)=0\}$ and $B_{m,k}^{i,r}(x):=\big\{H\big(x+\frac{r}{m}e_i\big)\geq \frac{1}{k}\big\}$ for $m,k=1,2,\dots$. Then, we have
\begin{equation}\label{2.12}
B_m^{i,r}(x):=\bigg\{H\bigg(x+\frac{r}{m}e_i\bigg)>0\bigg\}=\bigcup_{k=1}^\infty B_{m,k}^{i,r}(x).
\end{equation}
As $\{B_{m,k}^{i,r}(x)\}$ is a monotone increasing sequence with respect to the index $k$, it holds
\begin{equation}\label{2.13}
\mathbb{P}\big[B_m^{i,r}(x)\big]=\lim_{k\to\infty}\mathbb{P}\big[B_{m,k}^{i,r}(x)\big]
\end{equation}
Thus, we have
\begin{equation}\label{2.14}
\mathbb{P}\big[B(x)\cap B_m^{i,r}(x)\big]=\lim_{k\to\infty}\mathbb{P}\big[B(x)\cap B_{m,k}^{i,r}(x) \big].
\end{equation}
Let us define a simple function $s_{m,k}(x):=\frac{1}{k}1_{B(x)\cap B_{m,k}^{i,r}(x)}$. Then, the following inequality is true
\begin{equation}\label{2.15}
s_{m,k}(x)=\frac{1}{k}1_{B_{m,k}^{i,r}(x)}1_{B(x)}\leq H\bigg(x+\frac{r}{m}e_i\bigg)1_{B_{m,k}^{i,r}(x)}1_{B(x)}.
\end{equation}
Therefore, one gets that
\begin{equation}\label{2.16}
\frac{1}{k}\mathbb{P}\big[B(x)\cap B_{m,k}^{i,r}(x)\big]=\int_{\Omega}s_{m,k}(x)d\mathbb{P}\leq\mathbb{E}\bigg[H\bigg(x+\frac{r}{m}e_i\bigg)1_{B_{m,k}^{i,r}(x)}1_{B(x)}\bigg].
\end{equation}
As a result, it follows from equations \eqref{2.11} and \eqref{2.16} that
\begin{equation}\label{2.17}
\lim_{m\to\infty} \mathbb{P}\big[B_{m,k}^{i,r}(x)\cap B(x)\big]=0.
\end{equation}
On the other hand, since on the set $B(x)$, the random variable $H(x)$ is monotone decreasing function, we have
\begin{equation}\label{2.18}
\bigg[H\bigg(x+\frac{r}{m+1}e_i\bigg)-H\bigg(x+\frac{r}{m}e_i\bigg)\bigg]1_{B(x)}\geq 0.
\end{equation}
Thus, the set sequence $\{B(x)\cap B_{m,k}^{i,r}(x)\}$ is monotone increasing with respect to the both indexes $m$ and $k$. Therefore, it follows from equations \eqref{2.14} and \eqref{2.17} that 
\begin{equation}\label{2.19}
\lim_{m\to\infty}\mathbb{P}\big[B(x)\cap B_m^{i,r}(x)\big]=\lim_{m\to\infty}\Big(\lim_{k\to\infty}\mathbb{P}\big[B(x)\cap B_{m,k}^{i,r}(x)\big]\Big)=\lim_{k\to\infty}\Big(\lim_{m\to\infty}\mathbb{P}\big[B(x)\cap B_{m,k}^{i,r}(x)\big]\Big)=0.
\end{equation}

($ii$) Now, we assume that $|H(x)|$ is a monotone decreasing with respect to $i$--th argument $x_i$.
Since on the set $B(x)$, $\big|H\big(x+\frac{r}{m}e_i\big)\big|\leq |H(x)|$ for $m=1,2,\dots$ and $H(x)$ is the continuous function, it follows from dominated convergence theorem that 
\begin{equation}\label{2.46}
\lim_{m\to\infty} \mathbb{E}\bigg[\bigg|H\bigg(x+\frac{r}{m}e_i\bigg)\bigg|1_{\{H(x)=0\}}\bigg]=0.
\end{equation}
If we define $B_{m,k}^{i,r}(x)=\big\{\big|H\big(x+\frac{r}{m}e_i\big)\big|\geq \frac{1}{k}\big\}$ for $m,k=1,2,\dots$, then as $B_{m,k}^{i,r}(x)$ is monotone increasing with respect to the index $k$, we have 
\begin{equation}\label{2.47}
B_m^{i,r}(x):=\bigg\{\bigg|H\bigg(x+\frac{r}{m}e_i\bigg)\bigg|>0\bigg\}=\bigcup_{k=1}^\infty B_{m,k}^{i,r}(x).
\end{equation}
By replacing $H(x)$, which appears in ($i$) part of the Proposition with $|H(x)|$, it can be shown that equation \eqref{2.19} still holds for the monotone decreasing $|H(x)|$. It should be noted that the rest of the proof is exactly the same for both cases. 

Let us introduce the following notations: for fixed $n=1,2,\dots$ and arbitrary $m=1,2,\dots$, $A_{m}^{i,n}(x):=\bigcap_{r=1}^n \overline{B_{m,m^{n+1}}^{i,r}(x)}$ and
\begin{equation}\label{2.20}
\Delta_m^{i,n} H(x):=\sum_{j=0}^n(-1)^{n-j}C_n^jH\bigg(x+\frac{j}{m}e_i\bigg),
\end{equation}
where $C_n^i:=\frac{n!}{i!(n-i)!}$ is the binomial coefficient and for a generic set $A\in \mathcal{F}$, $\overline{A}=\Omega\backslash A$ is the complement of the set $A$. If we repeat the mean value theorem of calculus, then there is a sequence $\{x_m\}\in I$ such that
\begin{equation}\label{2.21}
\frac{\partial^n}{\partial x_i^n}H(x_m)= m^n\Delta_m^{i,n}H(x)~~~\text{and}~~~\lim_{m\to\infty}x_m=x.
\end{equation}
Consequently, $\big|m^n\Delta_m^{i,n}H(x)\big|\leq Z$. Since on the set $B(x)\cap A_m^{i,n}(x)$, $\big|\Delta_m^{i,n} H(x)\big|< \frac{2^n}{m^{n+1}}$, the following inequality is true
\begin{equation}\label{2.22}
\mathbb{E}\big[\big|m^n\Delta_m^{i,n}H(x)\big|1_{B(x)}\big]< \frac{2^n}{m}+\mathbb{E}\bigg[\Big|m^n\Delta_m^{i,n}H(x)\Big|1_{\big\{B(x)\cap \overline{A_{m}^{i,n}(x)}\big\}}\bigg].
\end{equation}
Observe that $\{B(x)\cap \overline{A_m^{i,n}(x)}\}$ is monotone increasing sequence with respect to index $m$. Due to equations \eqref{2.19} and \eqref{2.12} or \eqref{2.47}, we conclude that
\begin{equation}\label{2.23}
\lim_{m\to\infty}\mathbb{P}\Big[B(x)\cap \overline{A_{m}^{i,n}(x)}\Big]\leq \lim_{m\to\infty}\sum_{r=1}^n\mathbb{P}\Big[B(x)\cap B_{m,m^{n+1}}^{i,r}(x)\Big]\leq \lim_{m\to\infty}n\mathbb{P}\Big[B(x)\cap B_{m}^{i,r}(x)\Big]=0.
\end{equation}
Therefore, it follows from equation \eqref{2.22} and \eqref{2.23}, the dominated convergence theorem, and Lemma \ref{l.01} that
\begin{equation}\label{2.24}
\lim_{m\to\infty}\mathbb{E}\big[\big|m^n\Delta_m^{i,n}H(x)\big|1_{B(x)}\big]=\mathbb{E}\bigg[\bigg|\frac{\partial^n}{\partial x^n} H(x)\bigg|1_{B(x)}\bigg]=0.
\end{equation}

Equation \eqref{2.24} implies that
\begin{equation}\label{2.27}
\mathbb{P}\bigg[\bigg(\frac{\partial^n}{\partial x^n} H(x)\bigg)1_{B(x)}= 0\bigg]=1.
\end{equation}
Thus, it holds
\begin{equation}\label{2.28}
1=\mathbb{P}\bigg[\bigg\{\bigg(\frac{\partial^n}{\partial x^n} H(x)\bigg)1_{B(x)}= 0\bigg\}\bigcap B(x)\bigg]+\mathbb{P}\bigg[\bigg\{\bigg(\frac{\partial^n}{\partial x^n} H(x)\bigg)1_{B(x)}= 0\bigg\}\bigcap \overline{B(x)}\bigg].
\end{equation}
As a result, equation \eqref{2.09} holds. 

Let us denote for a generic random variable $X$, its positive and negative parts by $X^+$ and $X^-$, respectively, that is, $X^+:=\max(0,X)$ and $X^-:=\max(0,-X)$. Then, as the proof of Lemma 1, $|X|=X^++X^-$ and $X=X^+-X^-$. According to equation \eqref{2.24} and the fact that the positive and negative parts are nonnegative, we have
\begin{equation}\label{2.29}
\mathbb{E}\bigg[\bigg(\frac{\partial^n}{\partial x^n} H(x)\bigg)^+1_{B(x)}\bigg]=\mathbb{E}\bigg[\bigg(\frac{\partial^n}{\partial x^n} H(x)\bigg)^-1_{B(x)}\bigg]=0.
\end{equation}
Consequently, it holds
\begin{equation}\label{2.30}
\mathbb{E}\bigg[\bigg(\frac{\partial^n}{\partial x^n} H(x)\bigg)1_{B(x)}\bigg]=0.
\end{equation}
This completes the proof of the Proposition.
\end{proof}

The following two Corollaries, dealing with higher order derivatives of discrete and absolute continuous portfolio loss random variables are immediate consequences of Proposition 1.

\begin{cor}\label{c.01}
Let $I\subset \mathbb{R}^d$ be an open set, for all $x\in I$, $H(x)$ be an integrable random variable and its $n$--th order partial derivative $\frac{\partial^n}{\partial x_i^n}H(x)$ is continuous with respect to $i$--th ($i=1,\dots,d$) argument, and $Z:=\sup_{x\in I}\big|\frac{\partial^n}{\partial x_i^n}H(x)\big|$ be an integrable random variable. If $\mathbb{P}\big[H(x)=0\big]>0$ and on a set $\{H(x)=0\}$, $H(x)$ or $|H(x)|$ is a monotone decreasing function with respect to $i$--th ($i=1,\dots,d$) argument, then it holds
\begin{equation}\label{2.31}
\mathbb{E}\bigg[\bigg|\frac{\partial^n}{\partial x_i^n}H(x)\bigg|\bigg|H(x)=0\bigg]=0,
\end{equation}
\begin{equation}\label{2.32}
\mathbb{P}\bigg[\frac{\partial^n}{\partial x_i^n}H(x)=0 \bigg|H(x)=0\bigg]=1,
\end{equation}
and
\begin{equation}\label{2.33}
\mathbb{E}\bigg[\frac{\partial^n}{\partial x_i^n}H(x)\bigg|H(x)=0\bigg]=0.
\end{equation}
\end{cor}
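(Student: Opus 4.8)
The plan is to obtain \eqref{2.31}--\eqref{2.33} by normalizing the three conclusions of Proposition \ref{t.01}, i.e.\ by dividing each of \eqref{2.08}, \eqref{2.09}, and \eqref{2.10} by the strictly positive number $\mathbb{P}[H(x)=0]$. First I would observe that the hypotheses of this Corollary are exactly those of Proposition \ref{t.01} together with the extra assumption $\mathbb{P}[H(x)=0]>0$: in particular $\frac{\partial^n}{\partial x_i^n}H(x)$ is continuous in the $i$-th argument and dominated by the integrable envelope $Z=\sup_{x\in I}\big|\frac{\partial^n}{\partial x_i^n}H(x)\big|$, and on $\{H(x)=0\}$ either $H(x)$ or $|H(x)|$ is monotone decreasing in $x_i$. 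Hence equations \eqref{2.08}, \eqref{2.09}, and \eqref{2.10} are all available, and, because $\mathbb{P}[H(x)=0]>0$, the elementary conditional expectation given the event $\{H(x)=0\}$ is well defined by $\mathbb{E}[\,\cdot\,|\,H(x)=0]=\mathbb{E}[\,\cdot\,1_{\{H(x)=0\}}]/\mathbb{P}[H(x)=0]$.

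Next I would derive each identity in turn. For \eqref{2.31}, divide \eqref{2.08} by $\mathbb{P}[H(x)=0]$ to get $\mathbb{E}\big[\big|\frac{\partial^n}{\partial x_i^n}H(x)\big|\,\big|\,H(x)=0\big]=0$. For \eqref{2.32}, rewrite the event identity \eqref{2.09} as $\mathbb{P}\big[\{\frac{\partial^n}{\partial x_i^n}H(x)=0\}\cap\{H(x)=0\}\big]=\mathbb{P}[H(x)=0]$ and divide by $\mathbb{P}[H(x)=0]$; by the definition of conditional probability this is precisely $\mathbb{P}\big[\frac{\partial^n}{\partial x_i^n}H(x)=0\,\big|\,H(x)=0\big]=1$. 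For \eqref{2.33}, divide \eqref{2.10} by $\mathbb{P}[H(x)=0]$. Alternatively, one may note that \eqref{2.31} already forces $\frac{\partial^n}{\partial x_i^n}H(x)=0$ almost surely on $\{H(x)=0\}$, which yields both \eqref{2.32} and \eqref{2.33} simultaneously.

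Since the entire analytic content---the finite-difference approximation of the derivative via repeated use of the mean value theorem, the monotonicity argument giving $\lim_{m\to\infty}\mathbb{P}\big[B(x)\cap\overline{A_m^{i,n}(x)}\big]=0$, and the application of Lemma \ref{l.01}---has already been carried out in the proof of Proposition \ref{t.01}, there is essentially no obstacle here; the only point to verify is that dividing by $\mathbb{P}[H(x)=0]$ is legitimate, which is exactly what the additional hypothesis $\mathbb{P}[H(x)=0]>0$ guarantees. The Corollary is thus an immediate consequence, and the proof reduces to a one-line normalization of each of \eqref{2.08}, \eqref{2.09}, and \eqref{2.10}.
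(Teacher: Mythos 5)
Your proposal is correct and coincides with the paper's own argument: the paper likewise deduces the Corollary by dividing equations \eqref{2.08}--\eqref{2.10} from Proposition \ref{t.01} by $\mathbb{P}[H(x)=0]>0$, using the elementary definition of conditioning on a positive-probability event. Nothing further is needed.
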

\begin{proof}
Since $\mathbb{P}\big[H(x)=0\big]>0$, if we divide the both sides of equations \eqref{2.08}--\eqref{2.10} by $\mathbb{P}\big[H(x)=0\big]$, then the proof of the Corollary holds.
\end{proof}

\begin{cor}\label{c.02}
Let $I\subset \mathbb{R}^d$ be an open set, for all $x\in I$, $H(x)$ be an absolute continuous integrable random variable and its $n$--th order partial derivative $\frac{\partial^n}{\partial x_i^n}H(x)$ is continuous with respect to $i$--th ($i=1,\dots,d$) argument, and $Z:=\sup_{x\in I}\big|\frac{\partial^n}{\partial x_i^n}H(x)\big|$ be an integrable random variable. If on a set $\{H(x)=0\}$, $H(x)$ or $|H(x)|$ is a monotone decreasing function with respect to $i$--th ($i=1,\dots,d$) argument, then it holds
\begin{equation}\label{2.34}
\mathbb{E}\bigg[\bigg|\frac{\partial^n}{\partial x_i^n}H(x)\bigg|\bigg|H(x)=0\bigg]=0,
\end{equation}
\begin{equation}\label{2.35}
\mathbb{P}\bigg[\frac{\partial^n}{\partial x_i^n}H(x)=0 \bigg|H(x)=0\bigg]=1,
\end{equation}
and
\begin{equation}\label{2.36}
\mathbb{E}\bigg[\frac{\partial^n}{\partial x_i^n}H(x)\bigg|H(x)=0\bigg]=0.
\end{equation}
\end{cor}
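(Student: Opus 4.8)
Because $H(x)$ is absolutely continuous, $\mathbb{P}[H(x)=0]=0$, so — unlike in Corollary~\ref{c.01} — one cannot get \eqref{2.34}--\eqref{2.36} merely by dividing the corresponding statements of Proposition~\ref{t.01} by $\mathbb{P}[H(x)=0]$. My plan is to read the conditioning on the null event $\{H(x)=0\}$ as a limit of honest conditionings over shrinking slabs. Fix $x\in I$ and $i\in\{1,\dots,d\}$, set $\varepsilon_m:=m^{-(n+1)}$ and $S_m:=S_m(x):=\{|H(x)|\le\varepsilon_m\}$ (which has $\mathbb{P}[S_m]>0$ for every $m$ as soon as $0$ lies in the support of $H(x)$, implicit when one conditions on $H(x)=0$), and interpret $\mathbb{E}[\,\cdot\mid H(x)=0]$ as $\lim_{m\to\infty}\mathbb{E}[\,\cdot\,1_{S_m}]/\mathbb{P}[S_m]$. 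It then suffices to establish the slab analogue of \eqref{2.24},
\[
\lim_{m\to\infty}\frac{1}{\mathbb{P}[S_m]}\,\mathbb{E}\bigg[\bigg|\frac{\partial^n}{\partial x_i^n}H(x)\bigg|1_{S_m}\bigg]=0 ,
\]
after which \eqref{2.34} is immediate, \eqref{2.35} follows by dividing the slab version of \eqref{2.28} by $\mathbb{P}[S_m]$ exactly as \eqref{2.09} was obtained, and \eqref{2.36} follows by the positive/negative--part splitting used for \eqref{2.29}--\eqref{2.30}.

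For the displayed limit I would transcribe the proof of Proposition~\ref{t.01} almost verbatim, with $S_m$ in the role of $B(x)=\{H(x)=0\}$. Retaining the notation $B_{m,k}^{i,r}(x)$, $\Delta_m^{i,n}H(x)$ and $A_m^{i,n}(x)=\bigcap_{r=1}^n\overline{B_{m,m^{n+1}}^{i,r}(x)}$, observe that on $S_m\cap A_m^{i,n}(x)$ each term obeys $\big|H\big(x+\tfrac{j}{m}e_i\big)\big|\le m^{-(n+1)}$ for $j=0,1,\dots,n$ (the case $j=0$ by definition of $S_m$, the cases $j\ge1$ by definition of $A_m^{i,n}(x)$), hence $\big|m^n\Delta_m^{i,n}H(x)\big|\le 2^n/m$ there; combined with the global bound $\big|m^n\Delta_m^{i,n}H(x)\big|\le Z$ coming from the $n$--fold mean value theorem \eqref{2.21}, this gives the analogue of \eqref{2.22},
\[
\frac{1}{\mathbb{P}[S_m]}\,\mathbb{E}\big[\big|m^n\Delta_m^{i,n}H(x)\big|1_{S_m}\big]\ \le\ \frac{2^n}{m}+\frac{1}{\mathbb{P}[S_m]}\,\mathbb{E}\big[Z\,1_{S_m\cap\overline{A_m^{i,n}(x)}}\big].
\]
Replacing $m^n\Delta_m^{i,n}H(x)$ by its pointwise limit $\frac{\partial^n}{\partial x_i^n}H(x)$ on the left is then to be carried out via dominated convergence together with Lemma~\ref{l.01} applied to $A_m:=S_m\cap\overline{A_m^{i,n}(x)}$, just as in passing from \eqref{2.22} to \eqref{2.24}.

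The hard part will be the remainder term: one needs $\mathbb{E}\big[Z\,1_{S_m\cap\overline{A_m^{i,n}(x)}}\big]=o\big(\mathbb{P}[S_m]\big)$, for which (since $Z$ is integrable) it essentially suffices to show $\mathbb{P}\big[S_m\cap\overline{A_m^{i,n}(x)}\big]=o(\mathbb{P}[S_m])$, and since $\overline{A_m^{i,n}(x)}\subset\bigcup_{r=1}^n B_{m,m^{n+1}}^{i,r}(x)$ this amounts to
\[
\mathbb{P}\bigg[|H(x)|\le m^{-(n+1)},\ \bigg|H\bigg(x+\frac{r}{m}e_i\bigg)\bigg|\ge m^{-(n+1)}\bigg]=o\Big(\mathbb{P}\big[|H(x)|\le m^{-(n+1)}\big]\Big),\qquad r=1,\dots,n.
\]
This is where both hypotheses must be spent: the monotone decrease of $H(x)$ (or $|H(x)|$) along $e_i$ on $\{H(x)=0\}$, together with continuity in $x$, should force $t\mapsto H(\omega,x+te_i)$ to move monotonically and at a controlled rate for $\omega$ in all but a negligible part of the thin slab, so that $H\big(\omega,x+\tfrac{r}{m}e_i\big)$ cannot be much farther from $0$ than $H(\omega,x)$ already is; the absolute continuity of $H(x)$ — more precisely, enough regularity (continuity, positivity) of its density at $0$ — is what upgrades the heuristic ``the slab around $x$ and the slab around $x+\tfrac{r}{m}e_i$ asymptotically coincide'' into an actual comparison of the two probabilities. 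Isolating the minimal density regularity that makes this go through, and controlling the coupling between the slab half--width $m^{-(n+1)}$ and the shift $r/m$ as $m\to\infty$ (on which the dominated--convergence identification in the previous paragraph quietly depends too, since one is dividing by the vanishing quantity $\mathbb{P}[S_m]$), is the delicate step; everything else is a line--by--line copy of the proof of Proposition~\ref{t.01} with $S_m$ substituted for $\{H(x)=0\}$.
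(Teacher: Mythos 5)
Your reading of the conditional expectation as a limit of slab conditionings, and your diagnosis of where the difficulty sits, are both sound --- but your proof stops exactly at that point. The two estimates on which everything hinges, namely $\mathbb{E}\big[Z1_{S_m\cap\overline{A_m^{i,n}(x)}}\big]=o\big(\mathbb{P}[S_m]\big)$ (for which $\mathbb{P}\big[S_m\cap B_{m,m^{n+1}}^{i,r}(x)\big]=o\big(\mathbb{P}[S_m]\big)$ would not even suffice without additional uniform integrability of $Z$ over the shrinking slabs, since $Z$ is merely integrable) and $\mathbb{E}\big[\big|m^n\Delta_m^{i,n}H(x)-\tfrac{\partial^n}{\partial x_i^n}H(x)\big|1_{S_m}\big]=o\big(\mathbb{P}[S_m]\big)$, are only motivated heuristically (``should force'', ``the delicate step''), never established; dominated convergence and Lemma~\ref{l.01} deliver $o(1)$ with no rate, so they cannot survive division by the vanishing $\mathbb{P}[S_m]$. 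Because absolute continuity makes $\mathbb{P}[H(x)=0]=0$, the statements \eqref{2.08}--\eqref{2.10} of Proposition~\ref{t.01} are vacuous here and these ratio estimates are the entire content of Corollary~\ref{c.02}; what you have written is a correct reduction together with an honest list of what remains, i.e.\ a program rather than a proof.

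For comparison, the paper takes a different (and shorter, if itself rather informal) route that never compares events at the shifted points $x+\tfrac{r}{m}e_i$: writing $g(t):=\mathbb{E}\big[\big|\tfrac{\partial^n}{\partial x_i^n}H(x)\big|1_{\{H(x)\le t\}}\big]$, it identifies the slab numerator $\mathbb{E}\big[\big|\tfrac{\partial^n}{\partial x_i^n}H(x)\big|1_{\{0\le H(x)\le 2\varepsilon\}}\big]$ with a second difference quotient of $g$ at $t=0$ (equation \eqref{2.38}), hence of order $\varepsilon^2$, while absolute continuity gives $\mathbb{P}[0\le H(x)\le 2\varepsilon]\sim 2f_{H(x)}(0)\varepsilon$ (equation \eqref{2.39}); the quotient in \eqref{2.40} then vanishes, and \eqref{2.35}--\eqref{2.36} are obtained as in Proposition~\ref{t.01}. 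Note that this argument quietly assumes twice differentiability of $g$ at $0$ and, in effect, $f_{H(x)}(0)>0$ --- exactly the kind of quantitative density information your missing step requires. To complete your route you would have to import such hypotheses explicitly (for instance a continuous, strictly positive density of $H(x)$ near $0$ together with control of $Z$ on thin slabs, yielding a comparison of $\mathbb{P}\big[|H(x)|\le\varepsilon_m,\ |H(x+\tfrac{r}{m}e_i)|\ge\varepsilon_m\big]$ with $\mathbb{P}[|H(x)|\le\varepsilon_m]$) or switch to the paper's difference-quotient formulation; under the corollary's stated hypotheses alone, your key slab-probability comparison does not follow, so the crucial step is missing.
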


\begin{proof}
Because $\big\{0\leq H(x)\leq \frac{1}{m}\big\}$ is a monotone decreasing sequence and its limit is $\{H(x)=0\}$, by monotone convergence theorem for decreasing sequence, equation \eqref{2.08} can be written by
\begin{equation}\label{2.37}
\mathbb{E}\bigg[\bigg|\frac{\partial^n}{\partial x_i^n}H(x)\bigg|1_{\{H(x)=0\}}\bigg]=\lim_{\varepsilon\to 0}\mathbb{E}\bigg[\bigg|\frac{\partial^n}{\partial x_i^n}H(x)\bigg|1_{\{0\leq H(x)\leq \varepsilon\}}\bigg]=0.
\end{equation}
Consequently, one has
\begin{eqnarray}
&&\lim_{\varepsilon\to 0}\frac{\mathbb{E}\big[\big|\frac{\partial^n}{\partial x_i^n}H(x)\big|1_{\{0\leq H(x)\leq 2\varepsilon\}}\big]}{\varepsilon^2}\nonumber\\
&&=\lim_{\varepsilon\to 0}\frac{\mathbb{E}\big[\big|\frac{\partial^n}{\partial x_i^n}H(x)\big|1_{\{0\leq H(x)\leq 2\varepsilon\}}\big]-2\mathbb{E}\big[\big|\frac{\partial^n}{\partial x_i^n}H(x)\big|1_{\{0\leq H(x)\leq \varepsilon\}}\big]}{\varepsilon^2}\label{2.38}\\
&&=\frac{\partial^2 }{\partial t^2}\mathbb{E}\bigg[\bigg|\frac{\partial^n}{\partial x_i^n}H(x)\bigg|1_{\{H(x)\leq t\}}\bigg]\bigg|_{t=0}.\nonumber
\end{eqnarray}
Since the random variable $H(x)$ is absolute continuous, we have 
\begin{equation}\label{2.39}
\lim_{\varepsilon\to 0}\frac{\mathbb{P}\big[0\leq H(x)\leq 2\varepsilon\big]}{\varepsilon}=2f_{H(x)}(0),
\end{equation}
where $f_{H(x)}(t)$ is a density function of the random variable $H(x)$.
As a result, we find that
\begin{equation}\label{2.40}
\mathbb{E}\bigg[\bigg|\frac{\partial^n}{\partial x_i^n}H(x)\bigg|\bigg|H(x)=0\bigg]=\lim_{\varepsilon\to 0}\frac{\mathbb{E}\big[\big|\frac{\partial^n}{\partial x_i^n}H(x)\big|1_{\{0\leq H(x)\leq 2\varepsilon\}}\big]}{\mathbb{P}\big[0\leq H(x)\leq 2\varepsilon\big]}=0.
\end{equation}
Following the proof of Theorem \ref{t.01}, one can prove equations \eqref{2.35} and \eqref{2.36}. Thus, we prove the Corollary.
\end{proof}

To relax the condition that on the set $\{H(x)=0\}$ the random variable $H(x)$ or $|H(x)|$ is monotone decreasing with respect to its $i$-th $(i=1,\dots,d)$ argument, we assume that the following assumption holds.
\begin{ass}\label{a.01}
Let us assume that for integrable random variables $\frac{\partial}{\partial x_i} H(x)$, $i=1,\dots,d$, the following condition holds
\begin{equation}\label{2.41}
\frac{\partial}{\partial x_i}\mathbb{E}\bigg[\frac{\partial H(x)}{\partial x_j}\bigg|H(x)=0\bigg]=\frac{\partial}{\partial x_j}\mathbb{E}\bigg[\frac{\partial H(x)}{\partial x_i}\bigg|H(x)=0\bigg]~~~\text{for all}~i,j=1,\dots,d,
\end{equation}
\end{ass}
If Assumption \ref{a.01} holds, then the following Proposition is true.
\begin{prop}\label{p.01}
Let for all $x\in I$, $H(x)$ be an integrable random variable and positive homogeneous function with respect to argument $x$, where $I\subset \mathbb{R}^d$ is an open set. If Assumption \ref{a.01} is true, then it holds
\begin{equation}\label{2.42}
\mathbb{E}\bigg[\frac{\partial H(x)}{\partial x_i}\bigg|H(x)=0\bigg]=0~~~\text{for}~i=1,\dots,d.
\end{equation}
\end{prop}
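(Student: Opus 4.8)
The plan is to combine Euler's theorem for positively homogeneous functions with Assumption \ref{a.01}; no part of Proposition \ref{t.01} is needed. Throughout write $g_i(x):=\mathbb{E}\big[\frac{\partial H(x)}{\partial x_i}\big|H(x)=0\big]$ for $i=1,\dots,d$, interpreted as in Corollaries \ref{c.01} and \ref{c.02} according to whether $\mathbb{P}[H(x)=0]$ is positive or $H(x)$ is absolutely continuous; the goal is to show $g_i\equiv 0$ on $I$.

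First I would extract two consequences of positive homogeneity (of degree one). Euler's relation gives $\sum_{j=1}^d x_j\frac{\partial H(x)}{\partial x_j}=H(x)$ pointwise on $I$. Restricting this identity to $\{H(x)=0\}$, taking conditional expectation given $\{H(x)=0\}$, and using linearity of conditional expectation together with $\mathbb{E}[H(x)|H(x)=0]=0$ (and the integrability of the $\frac{\partial H(x)}{\partial x_j}$) yields
\[
\sum_{j=1}^d x_j\, g_j(x)=0,\qquad x\in I.
\]
Second, differentiating $H(\lambda x)=\lambda H(x)$ in $x_j$ shows each $\frac{\partial H}{\partial x_j}$ is positively homogeneous of degree zero; since in addition the conditioning event is scale invariant, $\{H(\lambda x)=0\}=\{H(x)=0\}$, it follows that $g_j(\lambda x)=g_j(x)$, so each $g_j$ is positively homogeneous of degree zero. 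Euler's relation applied to $g_j$ then gives $\sum_{i=1}^d x_i\frac{\partial g_j(x)}{\partial x_i}=0$ for every $j$.

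Next I would differentiate the displayed identity with respect to $x_i$ --- legitimate because Assumption \ref{a.01} presupposes the mixed derivatives $\frac{\partial g_j}{\partial x_i}$ exist --- obtaining $g_i(x)+\sum_{j=1}^d x_j\frac{\partial g_j(x)}{\partial x_i}=0$. Replacing $\frac{\partial g_j}{\partial x_i}$ by $\frac{\partial g_i}{\partial x_j}$ via Assumption \ref{a.01} turns this into $g_i(x)+\sum_{j=1}^d x_j\frac{\partial g_i(x)}{\partial x_j}=0$; the sum here is precisely the degree-zero Euler relation for $g_i$ established above, hence it vanishes, and we conclude $g_i(x)=0$ for all $x\in I$ and all $i=1,\dots,d$, which is \eqref{2.42}.

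The main obstacle is not a computation but making sure the two differentiation steps are justified: the degree-zero homogeneity of the $g_j$ (and hence the second Euler relation) uses that $I$ is stable under dilations near the identity and that the $g_j$ are differentiable, the latter being built into the hypothesis through Assumption \ref{a.01}; likewise the passage from Euler's relation for $H$ to the displayed identity requires the derivatives $\frac{\partial H(x)}{\partial x_j}$ to be integrable, which is assumed. A careful write-up should also pin down precisely the meaning of $\mathbb{E}[\,\cdot\,|H(x)=0]$ in the two regimes of Corollaries \ref{c.01} and \ref{c.02} and check that the scaling argument is compatible with that definition.
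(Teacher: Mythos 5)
Your proposal is correct and follows essentially the same route as the paper's own proof: Euler's relation for the degree-one homogeneous $H$, conditional expectation to get $\sum_j x_j g_j(x)=0$, differentiation in $x_i$, the degree-zero Euler relation for $g_i$, and Assumption \ref{a.01} to swap the mixed derivatives and isolate $g_i(x)=0$. Your remarks on the scale invariance of the conditioning event and the interpretation of $\mathbb{E}[\,\cdot\,|H(x)=0]$ only make explicit what the paper leaves implicit.
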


\begin{proof}
According to the Euler's formula, it holds
\begin{equation}\label{2.43}
H(x)=\sum_{i=1}^n \frac{\partial H(x)}{\partial x_i}x_i.
\end{equation}
Since $\mathbb{E}\big[H(x)\big|H(x)=0\big]=0$, we have 
\begin{eqnarray}
0&=& \frac{\partial}{\partial x_i}\mathbb{E}\big[H(x)\big|H(x)=0\big]=\frac{\partial}{\partial x_i}\bigg\{\sum_{j=1}^n\mathbb{E}\bigg[\frac{\partial H(x)}{\partial x_j}\bigg|H(x)=0\bigg]x_j\bigg\}\nonumber\\
&=&\sum_{j=1}^n\bigg(\frac{\partial}{\partial x_i}\mathbb{E}\bigg[\frac{\partial H(x)}{\partial x_j}\bigg|H(x)=0\bigg]\bigg)x_j+\mathbb{E}\bigg[\frac{\partial H(x)}{\partial x_i}\bigg|H(x)=0\bigg].\label{2.44}
\end{eqnarray}
As $\mathbb{E}\big[\frac{\partial H(x)}{\partial x_i}\big|H(x)=0\big]$ is a homogeneous function with degree of zero, it holds 
\begin{equation}\label{2.45}
\sum_{j=1}^n\bigg(\frac{\partial}{\partial x_j}\mathbb{E}\bigg[\frac{\partial H(x)}{\partial x_i}\bigg|H(x)=0\bigg]\bigg)x_j=0.
\end{equation}
Since Assumption \ref{a.01} holds, it follows from equations \eqref{2.44} and \eqref{2.45} that $\mathbb{E}\big[\frac{\partial H(x)}{\partial x_i}\big|H(x)=0\big]=0$. That completes the proof.
\end{proof}

\section{Derivatives of Expected Shortfall}

Let $L:\Omega\times\mathbb{R}^d\to \mathbb{R}$ be a nonlinear portfolio loss random variable, defined on the probability space $(\Omega,\mathcal{F},\mathbb{P})$ and for a confidence level $\alpha\in(0,1)$, $q_\alpha(x):=\inf\big\{t\in \mathbb{R}\big| F_{L(x)}(t)\geq \alpha\big\}$ be an $\alpha$--quantile (VaR) of the portfolio loss random variable $L(x)$, where $F_{L(x)}(t)$ is a distribution function of $L(x)$. Here we assume that the random variable $L(x)$ and the risk measure $q_\alpha(x)$ are sufficiently differentiable with respect to argument $x=(x_1,\dots,x_d)'\in\mathbb{R}^d$. 

In order to obtain higher order derivatives of the Expected Shortfall, we will follow ideas in \citeA{Acerbi02}. Let $U$ be an uniform random variable on $[0,1]$ and for $\alpha\in (0,1)$, $i=1,\dots,d$, $r=0,1,\dots,n$, and $m=1,2,\dots$, $q_\alpha\big(x+\frac{r}{m}e_i\big)$ be an $\alpha$--quantile of the loss random variable $L\big(x+\frac{r}{m}e_i\big)$. To keep notations simple, we introduce $H(x):=q_U(x)-q_\alpha(x)$. Then, since the quantile function $q_{\alpha}(x)$ is a monotone increasing function with respect to the parameter $\alpha$, it holds
\begin{equation}\label{3.01}
\{U>\alpha\}\subset \bigg\{H\bigg(x+\frac{r}{m}e_i\bigg)\geq 0\bigg\}
~~~\text{and}~~~\{U\leq\alpha\}\subset \bigg\{H\bigg(x+\frac{r}{m}e_i\bigg)\leq 0\bigg\}.
\end{equation}
Consequently, one gets that
\begin{equation}\label{3.02}
\{U\leq \alpha\}\bigcap \bigg\{H\bigg(x+\frac{r}{m}e_i\bigg)\geq 0\bigg\}\subset\bigg\{H\bigg(x+\frac{r}{m}e_i\bigg)= 0\bigg\}
\end{equation}
for $r=0,1,\dots,n$. Note that by the quantile transformation lemma (see \citeA{McNeil15}), for $r=0,1,\dots,n$,  
\begin{equation}\label{3.03}
\bigg\{H\bigg(x+\frac{r}{m}e_i\bigg)=0\bigg\}\overset{d}{=}\bigg\{L\bigg(x+\frac{r}{m}e_i\bigg)=q_\alpha\bigg(x+\frac{r}{m}e_i\bigg)\bigg\},
\end{equation}
where $d$ means equal distribution. Let us denote the ES at confidence level $\alpha\in(0,1)$ of the portfolio loss random variable $L(x)$ by $\text{ES}_\alpha(x)$. Then, the following Proposition holds.

\begin{prop}\label{t.02}
Let $I\subset \mathbb{R}^d$ be an open set, for all $x\in I$, $H(x):=L(x)-q_\alpha(x)$ be an integrable random variable and its $n$--th order partial derivative $\frac{\partial^n}{\partial x_i^n}H(x)$ is continuous with respect to $i$--th ($i=1,\dots,d$) argument, and $Z:=\sup_{x\in I}\big|\frac{\partial^n}{\partial x_i^n}L(x)\big|$ be an integrable random variable. If on a set $C(x)$, $H(x)$ is a monotone increasing function with respect to $i$--th ($i=1,\dots,d$) argument $x_i$, then it holds
\begin{equation}\label{3.04}
\frac{\partial^n}{\partial x_i^n}\mathrm{ES}_\alpha(x)=\frac{1}{1-\alpha}\bigg(\mathbb{E}\bigg[\frac{\partial^n}{\partial x_i^n}L(x)1_{\{L(x)\geq q_\alpha(x)\}}\bigg]-\frac{\partial^n}{\partial x_i^n}q_\alpha(x)\Big(\mathbb{P}\big[L(x)\geq q_\alpha(x)\big]-(1-\alpha)\Big)\bigg),
\end{equation}
where $C(x):=\{U\leq \alpha\}\cup \{H(x)\geq 0\}$.
\end{prop}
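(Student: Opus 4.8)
The plan is to reduce \eqref{3.04} to Proposition \ref{t.01} by passing, as in \citeA{Acerbi02}, to the representation of the Expected Shortfall through the auxiliary uniform variable $U$, so that the $x$--dependent indicator $1_{\{L(x)\geq q_\alpha(x)\}}$ is replaced by the indicator $1_{\{U>\alpha\}}$, which does not depend on $x$. Writing $H(x):=q_U(x)-q_\alpha(x)$ and recalling $(1-\alpha)\mathrm{ES}_\alpha(x)=\int_\alpha^1 q_u(x)\,du=\mathbb{E}\big[q_U(x)1_{\{U>\alpha\}}\big]$, I would first record the $n=0$ identity: by \eqref{3.01} the set $\{H(x)\geq0\}$ is the disjoint union of $\{U>\alpha\}$ and $\{H(x)\geq0\}\cap\{U\leq\alpha\}$, and by \eqref{3.02} the second piece lies in $\{H(x)=0\}$, where $q_U(x)=q_\alpha(x)$; hence $(1-\alpha)\mathrm{ES}_\alpha(x)=\mathbb{E}\big[q_U(x)1_{\{H(x)\geq0\}}\big]-q_\alpha(x)\big(\mathbb{P}[H(x)\geq0]-(1-\alpha)\big)$, which after applying the quantile transformation \eqref{3.03} is exactly \eqref{3.04} for $n=0$.

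For general $n$ I would apply the finite difference operator $\Delta_m^{i,n}$ of \eqref{2.20} to $\mathrm{ES}_\alpha$; since the indicator $1_{\{U>\alpha\}}$ is free of $x$ and $q_U(\cdot)$ is smooth in $x$, no boundary (kink) terms arise and $(1-\alpha)\Delta_m^{i,n}\mathrm{ES}_\alpha(x)=\mathbb{E}\big[\Delta_m^{i,n}q_U(x)1_{\{U>\alpha\}}\big]$. Splitting $q_U(x)=H(x)+q_\alpha(x)$ and then using $\{U>\alpha\}\subset\{H(x)\geq0\}$ together with $\{H(x)\geq0\}\cap\{U\leq\alpha\}\subset\{H(x)=0\}$ gives $(1-\alpha)\Delta_m^{i,n}\mathrm{ES}_\alpha(x)=\mathbb{E}\big[\Delta_m^{i,n}H(x)1_{\{H(x)\geq0\}}\big]-\mathbb{E}\big[\Delta_m^{i,n}H(x)1_{\{H(x)=0\}\cap\{U\leq\alpha\}}\big]+(1-\alpha)\Delta_m^{i,n}q_\alpha(x)$. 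Multiplying by $m^n$ and letting $m\to\infty$, the mean value theorem as in \eqref{2.21} gives $m^n\Delta_m^{i,n}H(x)=\frac{\partial^n}{\partial x_i^n}H(x_m)$ with $x_m\to x$, and $\big|m^n\Delta_m^{i,n}H(x)\big|$ is dominated by $Z$ plus a finite constant coming from the (deterministic) derivative $\frac{\partial^n}{\partial x_i^n}q_\alpha$ near $x$; so dominated convergence applies term by term, yielding $m^n\Delta_m^{i,n}\mathrm{ES}_\alpha(x)\to\frac{\partial^n}{\partial x_i^n}\mathrm{ES}_\alpha(x)$, $m^n\Delta_m^{i,n}q_\alpha(x)\to\frac{\partial^n}{\partial x_i^n}q_\alpha(x)$, and $\mathbb{E}\big[m^n\Delta_m^{i,n}H(x)1_{\{H(x)\geq0\}}\big]\to\mathbb{E}\big[\frac{\partial^n}{\partial x_i^n}H(x)1_{\{H(x)\geq0\}}\big]$.

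The decisive term is $\mathbb{E}\big[m^n\Delta_m^{i,n}H(x)1_{\{H(x)=0\}\cap\{U\leq\alpha\}}\big]$, which I claim tends to $0$. Since $\{H(x)=0\}\subset\{H(x)\geq0\}\subset C(x)$, the hypothesis gives that $H(x)$ is monotone increasing on $\{H(x)=0\}$ with respect to $x_i$, so $-H(x)$ is monotone decreasing there; Proposition \ref{t.01} applied to $-H$ --- its conclusion \eqref{2.09}, equivalently the almost sure identity \eqref{2.27} --- then yields $\frac{\partial^n}{\partial x_i^n}H(x)=0$ almost surely on $\{H(x)=0\}$, so multiplying by the indicator of the subset $\{H(x)=0\}\cap\{U\leq\alpha\}$ and passing to the limit (again by dominated convergence) gives $0$. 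Collecting the limits, $(1-\alpha)\frac{\partial^n}{\partial x_i^n}\mathrm{ES}_\alpha(x)=\mathbb{E}\big[\frac{\partial^n}{\partial x_i^n}H(x)1_{\{H(x)\geq0\}}\big]+(1-\alpha)\frac{\partial^n}{\partial x_i^n}q_\alpha(x)$, and expanding $\frac{\partial^n}{\partial x_i^n}H(x)=\frac{\partial^n}{\partial x_i^n}q_U(x)-\frac{\partial^n}{\partial x_i^n}q_\alpha(x)$, then invoking \eqref{3.03} to identify $q_U$ with $L$ inside the expectation and $\mathbb{P}[H(x)\geq0]$ with $\mathbb{P}[L(x)\geq q_\alpha(x)]$, produces \eqref{3.04} after collecting the $\frac{\partial^n}{\partial x_i^n}q_\alpha(x)$ terms.

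The main obstacle is legitimizing the passage through $U$ at both ends. On the differentiation side one needs the interchange of $\lim_{m\to\infty}m^n\Delta_m^{i,n}$ with $\mathbb{E}$, which requires an integrable dominating function for $\big|\frac{\partial^n}{\partial x_i^n}q_U(x)\big|$, that is, a clean relation between the derivatives in $x$ of the quantile coupling $q_U(\cdot)$ and of $L(\cdot)$; following \citeA{Acerbi02} this is handled by working with the version $L(x)=q_U(x)$, under which \eqref{3.03} becomes an almost sure identity and $Z$ controls everything. On the other end one must verify that the monotonicity hypothesis, stated for $H$ on all of $C(x)$, does restrict to $\{H(x)=0\}$ in the decreasing direction required by Proposition \ref{t.01}, which is exactly where the inclusion $\{H(x)=0\}\subset\{H(x)\geq0\}\subset C(x)$ and the sign flip $H\mapsto-H$ enter. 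Everything else --- the repeated splitting of indicators via \eqref{3.01}--\eqref{3.02} and the dominated/monotone convergence bookkeeping (including that $Z$ also controls $\frac{\partial^n}{\partial x_i^n}H$ on a bounded neighbourhood of $x$) --- is routine.
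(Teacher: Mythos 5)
Your argument is correct (to the same standard of rigor as the paper) but it handles the decisive boundary term by a genuinely different mechanism. The paper keeps $q_U$ intact in the split \eqref{3.06}, and on $C(x)$ it intersects with $C_m^{i,n}(x)=\bigcap_{r=1}^n\{H(x+\frac{r}{m}e_i)\geq 0\}$: by \eqref{3.02} applied at the shifted points, $q_U=q_\alpha$ there, so the random difference collapses to the deterministic $\Delta_m^{i,n}q_\alpha(x)$ times $\mathbb{P}[C(x)\cap C_m^{i,n}(x)]$, the monotonicity hypothesis makes these sets decrease to $C(x)$ (giving the limit $\frac{\partial^n}{\partial x_i^n}q_\alpha(x)\,\mathbb{P}[C(x)]$), and the remainder on $C(x)\cap\overline{C_m^{i,n}(x)}$ is killed by Lemma \ref{l.01}; Proposition \ref{t.01} is never invoked. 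You instead peel off $q_\alpha$ first, writing $q_U=H+q_\alpha$, so that the boundary term becomes $\mathbb{E}\big[m^n\Delta_m^{i,n}H(x)1_{\{H(x)=0\}\cap\{U\leq\alpha\}}\big]$, and you show it vanishes by applying Proposition \ref{t.01} (via \eqref{2.27}) to $-H$ together with dominated convergence. This buys an explicit link between Sections 2 and 3 and a shorter bookkeeping of limits, at the price of needing the full strength of Proposition \ref{t.01} where the paper only needs Lemma \ref{l.01}. Two caveats, neither fatal: (i) you read $C(x)$ literally as the union $\{U\leq\alpha\}\cup\{H(x)\geq0\}$, which is what makes $\{H(x)=0\}\subset C(x)$; the paper's proof actually treats $C(x)$ as the intersection $\{U\leq\alpha\}\cap\{H(x)\geq0\}$ (see \eqref{3.06} and the assertion $C(x)\subset\{L(x)=q_\alpha(x)\}$), under which monotonicity is only assumed on $\{U\leq\alpha\}\cap\{H(x)=0\}$ and you should note that the proof of Proposition \ref{t.01} localizes to that subset, which is all your indicator requires; (ii) the domination of $\frac{\partial^n}{\partial x_i^n}q_U$ by $Z=\sup_{x\in I}\big|\frac{\partial^n}{\partial x_i^n}L(x)\big|$ and the joint (not merely marginal) identification of $(q_U(x+\frac{r}{m}e_i))_r$ with $(L(x+\frac{r}{m}e_i))_r$ is a coupling issue you flag explicitly; the paper's own proof relies on the same identification through \eqref{3.03}, so this is inherited rather than introduced.
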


\begin{proof}
Let us fix $i=1,\dots,d$. According to the definition of the Expected Shortfall, we have
\begin{equation}\label{3.05}
\text{ES}_\alpha(x)=\frac{1}{1-\alpha}\int_\alpha^1q_u(x)du,
\end{equation}
see \citeA{Acerbi02}. Therefore, by equation \eqref{3.01} (for $r=0$) we have
\begin{eqnarray}
&&(1-\alpha)m^n\Delta_m^{i,n}\text{ES}_\alpha(x)=\int_\alpha^1m^n\Delta_m^{i,n}q_u(x)du=\mathbb{E}\Big[m^n\Delta_m^{i,n}q_U(x)1_{\{U>\alpha\}}\Big]\nonumber\\
&&=\mathbb{E}\Big[m^n\Delta_m^{i,n}q_U(x)1_{\{H(x)\geq 0\}}\Big]-\mathbb{E}\Big[m^n\Delta_m^{i,n}q_U(x)1_{C(x)}\Big]\label{3.06}
\end{eqnarray}
According to the mean value theorem of calculus, there is a sequence $\{x_m\}\in I$ such that
\begin{equation}\label{3.07}
\frac{\partial^n}{\partial x_i^n}L(x_m)= m^n\Delta_m^{i,n}L(x)~~~\text{and}~~~\lim_{m\to\infty}x_m=x.
\end{equation}
Consequently, $\big|m^n\Delta_m^{i,n}L(x)\big|\leq Z$. For the first term of the right--hand side of equation \eqref{3.06}, according to equation \eqref{3.03}, one obtains that
\begin{equation}\label{3.08}
\mathbb{E}\Big[m^n\Delta_m^{i,n}q_U(x)1_{\{H(x)\geq 0\}}\Big]=\mathbb{E}\Big[m^n\Delta_m^{i,n}L(x)1_{\{L(x)\geq q_\alpha(x)\}}\Big]
\end{equation}
By the dominated convergence theorem, equation \eqref{3.08} converges to
\begin{equation}\label{3.09}
\lim_{m\to\infty}\mathbb{E}\Big[m^n\Delta_m^{i,n}q_U(x)1_{\{H(x)\geq 0\}}\Big]=\mathbb{E}\bigg[\frac{\partial^n}{\partial x_i^n} L(x)1_{\{L(x)\geq q_\alpha(x)\}}\bigg].
\end{equation}
For the second term of the right--hand side of equation \eqref{3.06}, by the quantile transformation lemma (equation \eqref{3.03}), we have
\begin{equation}\label{3.10}
\mathbb{E}\Big[m^n\Delta_m^{i,n}q_U(x)1_{C(x)}\Big]=\mathbb{E}\Big[m^n\Delta_m^{i,n}q_U(x)1_{C(x)\cap C_m^{i,n}(x)}\Big]+\mathbb{E}\bigg[m^n\Delta_m^{i,n}L(x)1_{C(x)\cap \overline{C_m^{i,n}(x)}}\bigg],
\end{equation}
where the set $C_m^{i,n}(x)$ is defined by $C_m^{i,n}(x):=\bigcap_{r=1}^n\big\{H\big(x+\frac{r}{m}e_i\big)\geq 0\big\}$. Due to equation \eqref{3.02}, the first expectation of equation \eqref{3.10} can be written by
\begin{equation}\label{3.11}
\mathbb{E}\Big[m^n\Delta_m^{i,n}q_U(x)1_{C(x)\cap C_m^{i,n}(x)}\Big]=m^n\Delta_m^{i,n}q_\alpha(x)\mathbb{P}\Big[C(x)\cap C_m^{i,n}(x)\Big].
\end{equation}
Since on the set $C(x)\subset \{L(x)=q_\alpha(x)\}$ the random variable $H(x)$ is a monotone increasing function with respect to $i$--th argument, for $r=1,\dots,n$, it holds 
\begin{equation}\label{3.12}
\bigg\{H\bigg(x+\frac{r}{m+1}e_i\bigg)\geq 0\bigg\}\subset \bigg\{H\bigg(x+\frac{r}{m}e_i\bigg)\geq 0\bigg\}~~~\text{for}~m=1,2,\dots.
\end{equation}
That means a set sequence $\big\{H\big(x+\frac{r}{m}e_i\big)\geq 0\big\}$ is monotone decreasing with respect to $m$ on the set $C(x)$. Thus, it holds
\begin{equation}\label{3.13}
\bigcap_{m=1}^\infty C_m^{i,n}(x)=\bigcap_{r=1}^n\bigcap_{m=1}^\infty\bigg\{H\bigg(x+\frac{r}{m}e_i\bigg)\geq 0\bigg\}=\big\{H(x)\geq 0\big\}.
\end{equation}
As a result, we have
\begin{equation}\label{3.14}
\lim_{m\to\infty}\mathbb{P}\Big[C(x)\cap C_m^{i,n}(x)\Big]=\mathbb{P}\big[C(x)\big]
\end{equation}
and a set sequence $\big\{\overline{C_m^{i,n}(x)}\big\}$ is monotone increasing and
\begin{equation}\label{3.15}
\lim_{m\to\infty}\mathbb{P}\Big[C(x)\cap \overline{C_m^{i,n}(x)}\Big]=0.
\end{equation}
Consequently, it follows from equations \eqref{3.01}, \eqref{3.11}, \eqref{3.14}, and \eqref{3.15} and Lemma \ref{l.01} that
\begin{eqnarray}
&&\lim_{m\to\infty}\mathbb{E}\Big[m^n\Delta_m^{i,n}q_U(x)1_{C(x)\cap C_m^{i,n}(x)}\Big]\nonumber\\
&&~=\frac{\partial^n}{\partial x_i^n}q_\alpha(x)\mathbb{P}\big[C(x)\big]=\frac{\partial^n}{\partial x_i^n}q_\alpha(x)\Big(\mathbb{P}\big[L(x)\geq q_\alpha(x)\big]-(1-\alpha)\Big)\label{3.16}
\end{eqnarray}
and
\begin{equation}\label{3.17}
\lim_{m\to\infty}\mathbb{E}\bigg[m^n\Delta_m^{i,n}L(x)1_{C(x)\cap \overline{C_m^{i,n}(x)}}\bigg]=0.
\end{equation}
Therefore, by equations \eqref{3.06}, \eqref{3.09} \eqref{3.10}, \eqref{3.16}, and \eqref{3.17} one conclude that
\begin{equation}\label{3.18}
\frac{\partial^n}{\partial x_i^n}\text{ES}_\alpha(x)=\frac{1}{1-\alpha}\bigg(\mathbb{E}\bigg[\frac{\partial^n}{\partial x_i^n}L(x)1_{\{L(x)\geq q_\alpha(x)\}}\bigg]-\frac{\partial^n}{\partial x_i^n}q_\alpha(x)\Big(\mathbb{P}\big[L(x)\geq q_\alpha(x)\big]-(1-\alpha)\Big)\bigg).
\end{equation}
That completes the proof of the Theorem.
\end{proof}

The following Corollary, corresponding to the absolute continuous portfolio loss random variable is a direct consequence of Proposition 2.
\begin{cor}\label{c.03}
Let $I\subset \mathbb{R}^d$ be an open set, for all $x\in I$, $H(x):=L(x)-q_\alpha(x)$ be an integrable absolute continuous random variable and its $n$--th order partial derivative $\frac{\partial^n}{\partial x_i^n}H(x)$ is continuous with respect to $i$--th ($i=1,\dots,d$) argument, and $Z:=\sup_{x\in I}\big|\frac{\partial^n}{\partial x_i^n}L(x)\big|$ be an integrable random variable. If on a set $C(x)$, $H(x)$ is a monotone increasing function with respect to $i$--th ($i=1,\dots,d$) argument $x_i$, then it holds
\begin{equation}\label{3.19}
\frac{\partial^n}{\partial x_i^n}\mathrm{ES}_\alpha(x)=\frac{1}{1-\alpha}\mathbb{E}\bigg[\frac{\partial^n}{\partial x_i^n}L(x)1_{\{L(x)\geq q_\alpha(x)\}}\bigg].
\end{equation}
\end{cor}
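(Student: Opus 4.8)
The plan is to start from the identity \eqref{3.04} of Proposition~\ref{t.02}, whose hypotheses are all inherited here, and to show that under absolute continuity the correction term $\frac{\partial^n}{\partial x_i^n}q_\alpha(x)\big(\mathbb{P}[L(x)\geq q_\alpha(x)]-(1-\alpha)\big)$ is identically zero. Since this term is a scalar multiple of $\mathbb{P}[L(x)\geq q_\alpha(x)]-(1-\alpha)$, it suffices to prove that $\mathbb{P}[L(x)\geq q_\alpha(x)]=1-\alpha$ for every fixed $x\in I$.

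First I would observe that, for a fixed $x$, the quantile $q_\alpha(x)$ is a deterministic number, so $H(x)=L(x)-q_\alpha(x)$ being absolutely continuous forces $L(x)$ to be absolutely continuous as well; in particular its distribution function $F_{L(x)}$ is continuous on $\mathbb{R}$ and $\mathbb{P}[L(x)=q_\alpha(x)]=0$. Next I would recall the definition $q_\alpha(x)=\inf\{t\in\mathbb{R}\mid F_{L(x)}(t)\geq\alpha\}$ and argue that continuity of $F_{L(x)}$ upgrades the usual inequality $F_{L(x)}(q_\alpha(x))\geq\alpha$ to the equality $F_{L(x)}(q_\alpha(x))=\alpha$: if one had $F_{L(x)}(q_\alpha(x))>\alpha$, then by continuity there would exist $t<q_\alpha(x)$ with $F_{L(x)}(t)>\alpha$, contradicting the definition of the infimum. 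Combining these two facts gives
\begin{equation*}
\mathbb{P}\big[L(x)\geq q_\alpha(x)\big]=1-F_{L(x)}(q_\alpha(x))+\mathbb{P}\big[L(x)=q_\alpha(x)\big]=1-\alpha.
\end{equation*}

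Finally I would substitute $\mathbb{P}[L(x)\geq q_\alpha(x)]-(1-\alpha)=0$ back into \eqref{3.04}, which immediately collapses to \eqref{3.19}. There is essentially no hard step here: the only point requiring a little care is the passage from $F_{L(x)}(q_\alpha(x))\geq\alpha$ to equality, which relies on continuity of the distribution function rather than on any stronger regularity (such as strict monotonicity), and on noting that absolute continuity of the shifted variable $H(x)$ is equivalent to that of $L(x)$ for each fixed argument $x$.
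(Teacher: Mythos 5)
Your proposal is correct and follows exactly the paper's route: the paper's own proof is the one-line observation that absolute continuity of $H(x)$ gives $\mathbb{P}[L(x)\geq q_\alpha(x)]=1-\alpha$, so that the correction term in \eqref{3.04} vanishes and \eqref{3.19} follows. You simply spell out the quantile argument ($F_{L(x)}(q_\alpha(x))=\alpha$ and $\mathbb{P}[L(x)=q_\alpha(x)]=0$) that the paper leaves implicit, which is a correct and harmless elaboration.
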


\begin{proof} Since $H(x)$ is the absolute continuous random variable, $\mathbb{P}\big[L(x)\geq q_\alpha(x)\big]=1-\alpha$. Therefore, equation \eqref{3.19} follows from equation \eqref{3.04}.
\end{proof}

\section{Conclusion}
The paper provides a Proposition that higher order derivatives of a generic nonlinear random variable preserve in a conditional expectation operator. Also, we consider results, which follows from Proposition for a generic absolute continuous random variable and discrete random variable. Further, the paper studies a derivative preserving condition for a positive homogeneous random variable. Finally, we consider conditions, which preserve higher order derivatives of the Expected Shortfall for a generic nonlinear portfolio loss random variable.

\bibliographystyle{apacite}
\bibliography{References}

\begin{thebibliography}{}

\bibitem [\protect \citeauthoryear {%
Acerbi%
\ \BBA {} Tasche%
}{%
Acerbi%
\ \BBA {} Tasche%
}{%
{\protect \APACyear {2002}}%
}]{%
Acerbi02}
\APACinsertmetastar {%
Acerbi02}%
\begin{APACrefauthors}%
Acerbi, C.%
\BCBT {}\ \BBA {} Tasche, D.%
\end{APACrefauthors}%
\unskip\
\newblock
\APACrefYearMonthDay{2002}{}{}.
\newblock
{\BBOQ}\APACrefatitle {{On the Coherence of Expected Shortfall}} {{On the Coherence of Expected Shortfall}}.{\BBCQ}
\newblock
\APACjournalVolNumPages{Journal of Banking \& Finance}{26}{7}{1487--1503}.
\PrintBackRefs{\CurrentBib}

\bibitem [\protect \citeauthoryear {%
Artzner%
, Delbaen%
, Eber%
\BCBL {}\ \BBA {} Heath%
}{%
Artzner%
\ \protect \BOthers {.}}{%
{\protect \APACyear {1999}}%
}]{%
Artzner99}
\APACinsertmetastar {%
Artzner99}%
\begin{APACrefauthors}%
Artzner, P.%
, Delbaen, F.%
, Eber, J\BHBI M.%
\BCBL {}\ \BBA {} Heath, D.%
\end{APACrefauthors}%
\unskip\
\newblock
\APACrefYearMonthDay{1999}{}{}.
\newblock
{\BBOQ}\APACrefatitle {{Coherent Measures of Risk}} {{Coherent Measures of Risk}}.{\BBCQ}
\newblock
\APACjournalVolNumPages{Mathematical Finance}{9}{3}{203--228}.
\PrintBackRefs{\CurrentBib}

\bibitem [\protect \citeauthoryear {%
Klenke%
}{%
Klenke%
}{%
{\protect \APACyear {2013}}%
}]{%
Klenke13}
\APACinsertmetastar {%
Klenke13}%
\begin{APACrefauthors}%
Klenke, A.%
\end{APACrefauthors}%
\unskip\
\newblock
\APACrefYear{2013}.
\newblock
\APACrefbtitle {{Probability Theory: A Comprehensive Course}} {{Probability Theory: A Comprehensive Course}}.
\newblock
\APACaddressPublisher{}{Springer Science \& Business Media}.
\PrintBackRefs{\CurrentBib}

\bibitem [\protect \citeauthoryear {%
Kopp%
, Malczak%
\BCBL {}\ \BBA {} Zastawniak%
}{%
Kopp%
\ \protect \BOthers {.}}{%
{\protect \APACyear {2014}}%
}]{%
Kopp14}
\APACinsertmetastar {%
Kopp14}%
\begin{APACrefauthors}%
Kopp, E.%
, Malczak, J.%
\BCBL {}\ \BBA {} Zastawniak, T.%
\end{APACrefauthors}%
\unskip\
\newblock
\APACrefYear{2014}.
\newblock
\APACrefbtitle {{Probability for Finance}} {{Probability for Finance}}.
\newblock
\APACaddressPublisher{}{Cambridge University Press}.
\PrintBackRefs{\CurrentBib}

\bibitem [\protect \citeauthoryear {%
McNeil%
, Frey%
\BCBL {}\ \BBA {} Embrechts%
}{%
McNeil%
\ \protect \BOthers {.}}{%
{\protect \APACyear {2015}}%
}]{%
McNeil15}
\APACinsertmetastar {%
McNeil15}%
\begin{APACrefauthors}%
McNeil, A\BPBI J.%
, Frey, R.%
\BCBL {}\ \BBA {} Embrechts, P.%
\end{APACrefauthors}%
\unskip\
\newblock
\APACrefYear{2015}.
\newblock
\APACrefbtitle {{Quantitative Risk Management: Concepts, Techniques and Tools--Revised Edition}} {{Quantitative Risk Management: Concepts, Techniques and Tools--Revised Edition}}.
\newblock
\APACaddressPublisher{}{Princeton University Press}.
\PrintBackRefs{\CurrentBib}

\bibitem [\protect \citeauthoryear {%
Shreve%
}{%
Shreve%
}{%
{\protect \APACyear {2004}}%
}]{%
Shreve04}
\APACinsertmetastar {%
Shreve04}%
\begin{APACrefauthors}%
Shreve, S\BPBI E.%
\end{APACrefauthors}%
\unskip\
\newblock
\APACrefYear{2004}.
\newblock
\APACrefbtitle {{Stochastic Calculus for Finance II: Continuous-Time Models}} {{Stochastic Calculus for Finance II: Continuous-Time Models}}\ (\BVOL~11).
\newblock
\APACaddressPublisher{}{Springer Science \& Business Media}.
\PrintBackRefs{\CurrentBib}

\bibitem [\protect \citeauthoryear {%
Tasche%
}{%
Tasche%
}{%
{\protect \APACyear {1999}}%
}]{%
Tasche99}
\APACinsertmetastar {%
Tasche99}%
\begin{APACrefauthors}%
Tasche, D.%
\end{APACrefauthors}%
\unskip\
\newblock
\APACrefYearMonthDay{1999}{}{}.
\newblock
{\BBOQ}\APACrefatitle {{Risk Contributions and Performance Measurement}} {{Risk Contributions and Performance Measurement}}.{\BBCQ}
\newblock
\APACjournalVolNumPages{Report of the Lehrstuhl f{\"u}r mathematische Statistik, TU M{\"u}nchen}{}{}{}.
\PrintBackRefs{\CurrentBib}

\bibitem [\protect \citeauthoryear {%
Tasche%
}{%
Tasche%
}{%
{\protect \APACyear {2000}}%
}]{%
Tasche00}
\APACinsertmetastar {%
Tasche00}%
\begin{APACrefauthors}%
Tasche, D.%
\end{APACrefauthors}%
\unskip\
\newblock
\APACrefYearMonthDay{2000}{}{}.
\newblock
{\BBOQ}\APACrefatitle {{Conditional Expectation as Quantile Derivative}} {{Conditional Expectation as Quantile Derivative}}.{\BBCQ}
\newblock
\APACjournalVolNumPages{arXiv preprint math/0104190}{}{}{}.
\PrintBackRefs{\CurrentBib}

\end{thebibliography}

\end{document}